\theoremstyle{definition}
\newtheorem*{theorem}{Theorem}
\def\R{\mathbf{R}}
\renewcommand{\leq}{\leqslant}
\begin{document}

\title{Almost all sets of $d+2$ points on the $(d-1)$-sphere are not subtransitive}



\author{Sean Eberhard}
\address{Centre for Mathematical Sciences, Wilberforce Road, Cambridge CB3 0WA}
\email{s.eberhard@dpmms.cam.ac.uk}

\onehalfspace

\begin{abstract}
We generalise an argument of Leader, Russell, and Walters to show that almost all sets of $d + 2$ points on the $(d - 1)$-sphere $S^{d-1}$ are not contained in a transitive set in some $\R^n$.
\end{abstract}
\maketitle

A finite subset of $\R^d$ is called \emph{transitive} if it has a transitive group of symmetries, and \emph{subtransitive} if it is a subset of a transitive set in some $\R^n$, where possibly $n>d$. Clearly every subtransitive set lies on a sphere. The converse was answered negatively by Leader, Russell, and Walters~\cite{LRWcyclicquad,LRWeuclidramsey} in connection to some conjectures in Euclidean Ramsey theory. Their key idea in~\cite{LRWcyclicquad} was to show more strongly that almost all cyclic quadrilaterals are not even \emph{affinely subtransitive}; that is, they do not embed into a transitive set even by a (nonconstant) affine map $x\mapsto Ax + b$.

The purpose of this note is to point out that both the result and the argument in \cite{LRWcyclicquad} generalise straightforwardly: almost all sets of $d+2$ points on the $(d-1)$-sphere are not affinely subtransitive. On the other hand, it is not hard to see that every affinely independent set of $d+1$ points, in other words a nondegenerate simplex, is subtransitive (see, e.g., \cite{FRsimplices} or \cite{LRWeuclidramsey}), so this result is best possible.

\begin{theorem}
Almost every set of $d+2$ points on the $(d-1)$-sphere $S^{d-1}\subset\R^d$ is not affinely subtransitive.
\end{theorem}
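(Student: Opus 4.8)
The plan is to reduce the statement to a \emph{countability} assertion about a single invariant and then invoke a routine measure-zero argument. Write a configuration as $x=(x_1,\dots,x_{d+2})\in (S^{d-1})^{d+2}$. Off a proper real-analytic subset the points are in affine general position, so there is a unique-up-to-scale affine dependence $\sum_i\lambda_i x_i=0$, $\sum_i\lambda_i=0$, with every $\lambda_i\neq 0$; by Cramer's rule the resulting map $\Phi\colon x\mapsto[\lambda_1:\cdots:\lambda_{d+2}]$ is a non-constant real-analytic function valued in $\mathbf{P}^{d+1}(\R)$. The first observation is that $\Phi$ is an \emph{affine invariant}: if $x\mapsto Ax+b=:y$ is affine then $\sum_i\lambda_i y_i=A\bigl(\sum_i\lambda_i x_i\bigr)+\bigl(\sum_i\lambda_i\bigr)b=0$, so the images satisfy the same dependence. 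I will show that affine subtransitivity forces $\Phi(x)$ into a fixed countable set $C\subset\mathbf{P}^{d+1}(\overline{\Q})$. Granting this, the affinely subtransitive configurations lie in $\bigcup_{c\in C}\Phi^{-1}(c)$; since $\Phi$ is non-constant real-analytic on a connected full-measure domain (here $d\geq 2$; the case $d=1$ is trivial), each fibre is null---a positive-measure fibre would force the analytic functions cutting it out to vanish identically, i.e.\ $\Phi$ constant---and a countable union of null sets is null.

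It remains to prove the key lemma: if $\{y_1,\dots,y_{d+2}\}$ is an affinely dependent subset of a transitive set $Y\subset\R^n$, with dependence $\sum_i\lambda_i y_i=0$, $\sum_i\lambda_i=0$, then $[\lambda]$ is algebraic and ranges over countably many values. Since $\sum_i\lambda_i=0$, the relation is translation-invariant, so I may centre $Y$ at its symmetry-fixed centroid; then all points of $Y$ have a common norm and $G\leq O(n)$ permutes $Y$ transitively. Extending $\lambda$ by zero gives $\tilde\lambda\in\R^{Y}$, and for each $s$ one has $(\Gamma\tilde\lambda)_s=\langle y^{(s)},\sum_i\lambda_i y_i\rangle=0$, where $\Gamma=(\langle y^{(s)},y^{(t)}\rangle)$ is the Gram matrix of $Y$; thus $\tilde\lambda\in\ker\Gamma$. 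Because $\Gamma_{gs,\,gt}=\Gamma_{s,t}$, the matrix $\Gamma$ lies in the centralizer algebra of the permutation action of $G$ on $Y$, which is spanned over $\Q$ by the $0/1$ orbital matrices. In the multiplicity-free case this algebra is commutative, its primitive idempotents project onto the common eigenspaces of the integral orbital matrices, and these eigenspaces are defined over $\overline{\Q}$; hence $\ker\Gamma$ is a sum of $\overline{\Q}$-rational subspaces and is itself defined over $\overline{\Q}$. Intersecting with the coordinate subspace spanned by the $d+2$ chosen points yields (generically a line, hence) an algebraic vector, so $[\lambda]\in\mathbf{P}^{d+1}(\overline{\Q})$. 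As there are only countably many isomorphism types of the data $(G,\ \text{stabiliser},\ \text{orbital scheme, chosen points})$ and finitely many eigenspace-subsums for each, the set $C$ of possible $[\lambda]$ is countable.

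The main obstacle is the passage beyond the multiplicity-free case. When the permutation module $\R[Y]$ contains an irreducible $G$-representation with multiplicity $>1$, the centralizer algebra is non-commutative and $\ker\Gamma$ can vary in a positive-dimensional family as the continuous moduli of $Y$ (the inner products within each isotypic block) vary; a priori this could place $[\lambda]$ at a transcendental point, breaking countability. I expect to remove this difficulty exactly as in \cite{LRWcyclicquad}: one replaces $Y$ by a multiplicity-free model carrying the same dependence---for instance by averaging the realisation over $G$ onto the subrepresentation generated by $y_1,\dots,y_{d+2}$, or by passing to the coherent quotient on which $\tilde\lambda$ is supported---thereby reducing to the commutative situation above, in which the relevant eigenspaces, and hence $[\lambda]$, are algebraic. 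Verifying that this reduction preserves both the dependence vector and transitivity is the technical heart of the matter; everything else is the bookkeeping of the measure-zero assembly described above.
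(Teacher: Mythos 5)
There is a genuine gap, and it is not the one you flagged. Your key lemma --- that affine subtransitivity forces the dependence class $[\lambda]$ into a fixed countable set $C$ --- is false, because ``nonconstant affine'' does not mean injective or even rank-preserving. A rank-one affine map is allowed, and it produces continuum-sized families of affinely subtransitive configurations in general position. Concretely, for $d=2$: take any isosceles trapezoid inscribed in the circle, with $x_0,x_1$ on one of the parallel sides and $x_2,x_3$ on the other, and let $f:\R^2\to\R^1$ be an affine functional sending the two parallel lines to $1$ and $-1$ respectively. Then $f$ is nonconstant and maps the configuration into the transitive set $\{1,-1\}$ (the orbit of $1$ under $\{\pm 1\}\leq O(1)$), so every such trapezoid is affinely subtransitive. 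Its dependence is $[\lambda]=[-1:1:t:-t]$ where $t$ is the ratio of the parallel side lengths, which sweeps a continuum (including transcendental values) as the trapezoid varies, and all four coordinates are nonzero, so these configurations lie in your general-position domain. The same construction works for all $d$: put $d$ of the points on one hyperplane section of the sphere and $2$ on a parallel one. So no countable set $C$ can capture all subtransitive values of $\Phi$, and the measure-zero assembly via $\bigcup_{c\in C}\Phi^{-1}(c)$ cannot work. The same degeneracy also invalidates your parenthetical ``generically a line'': genericity of $x$ does not make the dependence space of the images $y_i=f(x_i)$ one-dimensional when $f$ collapses rank, so even in the multiplicity-free case you would only conclude that $[\lambda]$ lies in one of countably many algebraic \emph{subspaces}, which can be positive-dimensional --- and, as the trapezoids show, genuinely are.

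This shows the correct shape of the conclusion is weaker than yours: the subtransitive configurations form a countable union of \emph{proper subvarieties} (possibly positive-dimensional), not a countable union of fibres of an invariant. That is exactly what the paper proves, and its route is worth contrasting with yours: instead of analysing an arbitrary ambient transitive set via its Gram matrix and centralizer algebra, it fixes the countable data differently --- a finite group $G\leq O(n)$ and elements $g_1,\ldots,g_{d+1}$ --- normalises the configuration to $0,e_1,\ldots,e_d,\alpha$, and observes that subtransitivity becomes a linear system $\sum_k \alpha_k(g_kb-b)+b=g_{d+1}b$ in the unknown $b$, whose solvability (modulo fixed vectors) is the vanishing of minors, i.e.\ polynomial conditions in $\alpha$. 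Properness of this variety is then established by exhibiting a single $\alpha$ (a non-convex one, e.g.\ $\alpha=(1/(2d),\ldots,1/(2d))$) that is not affinely subtransitive. Note also that your proposed rescue of the multiplicity $>1$ case ``exactly as in \cite{LRWcyclicquad}'' is misplaced: that paper contains no multiplicity-free reduction or coherent-quotient argument; its method is precisely the fix-the-group linear-algebra argument just described, which never needs the spectral structure of the Gram matrix at all. If you want to salvage your framework, you would have to replace ``$\Phi(x)\in C$ countable'' by ``$\Phi(x)$ lies in a countable union of proper algebraic subsets of $\mathbf{P}^{d+1}$,'' prove properness by an explicit non-example, and check that $\Phi$ does not map positive measure into any proper subvariety --- at which point you have reconstructed the paper's proof.
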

\begin{proof}
Let $x_0,\ldots,x_{d+1}$ be chosen uniformly at random from $S^{d-1}$. Since there are only countably many finite groups, each of which has only countably many orthogonal representations up to orthogonal conjugacy, it suffices to fix a finite subgroup $G$ of $O(n)$, and elements $g_1,\ldots,g_{d+1}\in G$, and show that almost surely there is no nonconstant affine $f:\R^d\to\R^n$ such that
\begin{equation}\label{subtrans}
f(x_k) = g_k f(x_0) \quad\text{for all $k = 1,\ldots,d + 1$}.
\end{equation}

If $x_0,\ldots,x_d$ are affinely independent then they may be affinely mapped to the standard affine basis $0, e_1,\ldots,e_d$ of $\R^d$. The image $\alpha = (\alpha_1,\ldots,\alpha_d)\in\R^d$ of $x_{d+1}$ is then uniquely determined. The
function $\phi : (x_0,\ldots, x_{d+1}) \mapsto \alpha$ thus defined is a rational map, and moreover the image of $\phi$ is ``large"\footnote{For instance, $\text{image}(\phi)\supset (0,1)^{d-1}\times(1,\infty)$. In the language of algebraic geometry, $\phi$ is a \emph{dominant} rational map: its image is not contained in any proper subvariety.}.

If $\phi(x_0,\ldots,x_{d+1})=\alpha$, then \eqref{subtrans} has a nonconstant affine solution if and only if it does when $(x_0,\ldots,x_{d+1})$ is replaced by $(0, e_1,\ldots,e_d,\alpha)$; call this condition \eqref{subtrans}$^\prime$. Writing $f(x) = Ax + b$ with $A\in\R^{n\times d}$ and $b\in \R^n$, the conditions $k = 1, \ldots, d$ of \eqref{subtrans}$^\prime$ are equivalent to
\[A = (g_1 b-b,\ldots, g_d b - b),\]
while the final condition states that
\begin{equation}\label{bcondition}
\sum_{k=1}^d \alpha_k(g_k b-b) + b = g_{d+1} b.
\end{equation}

Note that \eqref{bcondition} is an $n\times n$ linear condition on $b$. If the only solutions are fixed points of $\{g_1,\ldots,g_{d+1}\}$, then $A$ must be $0$, so $f$ must be constant. Quotienting by the subspace of fixed points of $\{g_1,\ldots,g_{d+1}\}$, we therefore obtain an $n\times n^\prime$ (where $n^\prime \leq n$) linear system which has a nonzero solution if and only if \eqref{subtrans}$^\prime$ has a nonconstant affine solution. Since each $n^\prime\times n^\prime$ minor of this system is a polynomial in $\alpha_1,\ldots,\alpha_d$, it follows, unless each such polynomial is identically zero, that the set of $\alpha$ such that \eqref{subtrans}$^\prime$ has a nonconstant affine solution is contained in a proper subvariety of $\R^d$. Since $\phi$ has large image, it then follows that the set of $(x_0,\ldots,x_{d+1})$ such that \eqref{subtrans} has a nonconstant affine solution is contained in a proper subvariety of $(S^{d-1})^{d+2}$.

Thus it remains only to show that some $n^\prime\times n^\prime$ minor of this system is not identically zero, or equivalently that \eqref{subtrans}$^\prime$ does not always have a nonconstant affine solution. That is, we must rule out the possibility that $\{0, e_1,\ldots, e_d,\alpha\}$ is affinely subtransitive for \emph{every} $\alpha\in\R^d$. But if $\alpha = (1/(2d),\ldots,1/(2d))$ then $\{0,e_1,\ldots,e_d,\alpha\}$ is not even convex, so it cannot even be mapped onto a sphere with a nonconstant affine map.
\end{proof}

For other results about subtransitive sets, see~\cite{J}.

I am grateful to Imre Leader for his comments and discussion.

\bibliography{subtrans}{}

\begin{thebibliography}{1}

\bibitem{FRsimplices}
P.~Frankl and V.~R{\"o}dl.
\newblock A partition property of simplices in {E}uclidean space.
\newblock {\em J. Amer. Math. Soc.}, 3(1):1--7, 1990.

\bibitem{J}
F.~E.~A. Johnson.
\newblock Finite subtransitive sets.
\newblock {\em Mathematical Proceedings of the Cambridge Philosophical
  Society}, 140, 0 2006.

\bibitem{LRWcyclicquad}
I.~Leader, P.~A. Russell, and M.~Walters.
\newblock Transitive sets and cyclic quadrilaterals.
\newblock {\em J. Comb.}, 2(3):457--462, 2011.

\bibitem{LRWeuclidramsey}
I.~Leader, P.~A. Russell, and M.~Walters.
\newblock Transitive sets in {E}uclidean {R}amsey theory.
\newblock {\em J. Combin. Theory Ser. A}, 119(2):382--396, 2012.

\end{thebibliography}
\bibliographystyle{abbrv}
\end{document}